\documentclass[11pt]{article}
\usepackage[T1]{fontenc}
\usepackage[utf8]{inputenc}
\usepackage{lmodern}
\usepackage{amsmath,amssymb,amsthm,mathtools}
\usepackage{microtype}
\usepackage{geometry}
\usepackage{xcolor}
\usepackage{hyperref}
\usepackage{enumitem}
\usepackage{booktabs}
\usepackage{array}
\hypersetup{colorlinks=true,linkcolor=blue!45!black,citecolor=blue!45!black,urlcolor=blue!45!black,pdftitle={A direct proof of Infinity in NF^(-empty set)},pdfauthor={Zuhair A. Al-Johar}}
\newtheorem{theorem}{Theorem}[section]
\newtheorem{lemma}[theorem]{Lemma}

\newtheorem{corollary}[theorem]{Corollary}

\newtheorem{definition}[theorem]{Definition}
\newcommand{\NF}{\mathrm{NF}^{-\varnothing}}

\newcommand{\Fin}{\operatorname{Finite}}
\newcommand{\Even}{\operatorname{Even}}
\newcommand{\BiWO}{\operatorname{BiWO}}
\newcommand{\NoNT}{\operatorname{NoNT}}
\newcommand{\USC}{\operatorname{USC}}
\newcommand{\Powp}{\mathcal{P}^{+}}
\newcommand{\Orb}{\Phi}
\newcommand{\Succ}{\operatorname{Succ}}
\newcommand{\defeq}{\mathrel{:=}}

\title{A direct proof of Infinity in $\mathrm{NF}^{-\varnothing}$}
\author{Zuhair A. Al-Johar}
\date{17 October 2026}
\begin{document}
\maketitle

\begin{abstract}
We give a direct proof of Infinity in the no-empty version of New Foundations, denoted $\NF$, by establishing that every model of the specified theory contains an object that is not finite in a precise, internally defined sense. The version considered has extensionality, at least two distinct objects, no empty objects, and \emph{inhabited stratified comprehension}: a stratified extension is supplied whenever it has a member. Finiteness combines a two-sided internally represented well-order with the exclusion of internally represented nonterminating strict transitive relations on inhabited subsets. Assuming all objects finite, partial bijections yield comparison and finite extrema of positive Frege cardinals. Positive exponentiation, singleton type-raising, and the least orbit of the genuine two-cardinal lead to the equation $n=T(n)+1_F$. We prove separately, and in detail, that adjoining one point reverses internally defined parity: the key ingredients are a graph-rewiring proof of invariance under adjoining two points and one instance of stratified induction derived from the all-finite assumption. Since type-raising preserves parity, the cardinal equation is impossible. We identify the corresponding Lean~4 declarations and make explicit the scope of the formal verification. No model-existence or consistency theorem for ordinary NF is claimed.
\end{abstract}

\noindent\textbf{Keywords:} New Foundations, Specker's argument, infinity, stratified comprehension, Frege cardinals, parity, Lean 4.

\section{Introduction and exact scope}
Quine's New Foundations (NF) uses stratified comprehension in a one-sorted language \cite{Quine1937}. Specker's classical work on Choice and Infinity in NF \cite{Specker1953}, and Rosser's analysis of Infinity and ordered pairs \cite{Rosser1952}, motivate the type-raising and cardinal-comparison viewpoint used here. Our result concerns the \emph{no-empty version of NF}, denoted $\NF$, with the inhabited-comprehension axiom specified below, rather than an independently constructed model of ordinary NF or NFU. In particular, the adjective ``positive'' in the accompanying formal development refers to the \emph{inhabited-output restriction on comprehension}, not to a prohibition of logical negation in stratified formulas.

Here is the exact assertion proved in the formal development:
\begin{theorem}[Infinity for the no-empty version of NF]\label{thm:main}
Let $M$ be a one-sorted membership structure satisfying extensionality, inhabited stratified comprehension, the existence of at least two distinct objects, and the assertion that every object has a member. Then
\[
\exists S\;\neg\Fin(S),
\]
where $\Fin$ is Definition~\ref{def:finite} below.
\end{theorem}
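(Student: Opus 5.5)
We argue by contradiction: assume every object of $M$ is finite in the sense of Definition~\ref{def:finite}, and derive an impossible cardinal equation following the Specker--Rosser pattern outlined in the abstract. The first step builds the arithmetic of positive Frege cardinals. Inhabited stratified comprehension yields, for each object $x$, its cardinal $|x|$, the class of all objects equinumerous with $x$; this class is inhabited by $x$ itself. The representation of partial bijections must be stratified, so we would use a pairing whose type displacement is controlled; here the two-point axiom supplies distinct markers. Under the all-finite hypothesis, the two-sided well-order contained in $\Fin$ lets us run a greedy matching between any two objects and show that the partial bijections are comparable: one of $x,y$ injects into the other. The no-nonterminating-transitive-relation clause gives the extrema of finite inhabited families of cardinals. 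Together these yield a linear comparison and a well-behaved $+1_F$ on positive cardinals.

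Next we introduce the operations needed for Specker's argument. Singleton type-raising is $T(|x|)=|\USC(x)|$, where $\USC(x)$ is the set of singletons of members of $x$; we check that $T$ is well defined and order-preserving. Positive exponentiation is $2^{T(|x|)}=|\Powp(x)|$, where $\Powp$ is the set of inhabited subsets. This is the correct replacement for the full power set, since there is no empty set, so the exponent identity picks up an offset. We then take the genuine two-cardinal, the cardinal of a two-element object, which exists because of the two distinct objects. We consider the orbit $\Orb$ of the iteration $n\mapsto$ (positive exponential of $n$), formed as the least inhabited stratified class closed under that step, and use the all-finite hypothesis to pick its least element that breaks the expected pattern. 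Comparing $|V|$, which is finite, with $|\Powp(V)|$ and $|\USC(V)|$ along this orbit, we aim to derive, exactly as in Specker's argument that $T$ cannot commute with finite exponentiation, an equation $n=T(n)+1_F$ for some positive cardinal $n$.

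The final step is parity. We define $\Even$ internally, via a perfect matching represented by a set of pairs, or equivalently via alternation along the well-order. We then prove two facts. First, $T$ preserves parity: a matching on $x$ transfers to $\USC(x)$ and conversely. Second, adjoining one point reverses parity. This second fact is the step I expect to be the main obstacle. The plan is, first, to show that parity is invariant under adjoining two points, by rewiring a matching graph: given a matching of $x\cup\{a,b\}$, reroute the edges through $a$ and $b$ to obtain a matching of $x$, and conversely. Second, to exclude the case where both $x$ and $x\cup\{p\}$ are even, or both are odd, we would use a single stratified induction along the well-order of $x$, justified by the all-finite assumption through the no-nonterminating-relation clause applied to a putative minimal counterexample. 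The delicate part is keeping the induction formula stratified while quantifying over matchings. Granting these, $n$ and $T(n)$ have the same parity, while $T(n)+1_F$ has the opposite parity, so $n=T(n)+1_F$ is impossible. Hence some $S$ satisfies $\neg\Fin(S)$.
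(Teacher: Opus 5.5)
Your overall frame matches the paper's: contradiction from $\forall A\,\Fin(A)$, positive Frege cardinals, $T$, positive exponentiation $E$, an orbit seeded at $2_F$, and parity via two-point rewiring plus one stratified induction. Your parity half is sound in outline. Inducting along the well-order of $x$ is a workable variant of the paper's induction over the whole counterexample class ordered by proper inclusion.

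The genuine gap is the middle step: you never actually derive $n=T(n)+1_F$. ``Pick its least element that breaks the expected pattern'' leaves the pattern unspecified. ``$T$ cannot commute with finite exponentiation'' also misdescribes the mechanism: the argument \emph{uses} the compatibility $E(T(p))=T(E(p))$, and the asymmetry lies in the domain of $E$. Comparing $|V|$ with $|\Powp(V)|$ and $|\USC(V)|$ only yields $E(\kappa)=\Omega$ and $\kappa<\Omega$. What is missing is:
\begin{itemize}
\item[(i)] the rest of the boundary analysis: $E(\lambda)=\kappa$ for $\lambda=T(\kappa)$ (a typed recoding $\Powp(\USC(V))\simeq\USC(V)$ via $X\mapsto\{\bigcup X\}$); $E(p)$ is defined iff $p\le\kappa$; and $T(p)\le\kappa$ for every $p$;
\item[(ii)] the terminal lemma these yield: if $E(p)$ is undefined then $p>\kappa$, so $\lambda<T(p)\le\kappa$. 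Hence $E(T(p))$ exists and exceeds $E(\lambda)=\kappa$, so $|\Orb(p)|=1_F$ but $|\Orb(T(p))|=2_F$;
\item[(iii)] orbit splitting: $|\Orb(p)|=|\Orb(E(p))|+1_F$ when $E(p)$ is defined.
\end{itemize}

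With these, the statement to prove is $|\Orb(T(p))|=T(|\Orb(p)|)+1_F$ for every $p\in C=\Orb(2_F)$, and the induction has to run \emph{downward}. The base case is the terminal element. The step passes from $q=E(p)$ back to $p$ using splitting, $E(T(p))=T(q)$, and $T(r+1_F)=T(r)+1_F$. So you take a \emph{greatest} counterexample (available by finite extrema), not a least one. Then $n$ is the cardinality $|C|$ of the orbit, not a member of it, and $T(2_F)=2_F$ turns the identity at $p=2_F$ into $n=T(n)+1_F$.

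The shape of this invariant is dictated by stratification. $|\Orb(T(p))|$ and $T(|\Orb(p)|)$ sit at the same type, so the counterexample set exists by inhabited comprehension. A naive pattern such as $T(p)=p$ along the orbit is unstratified ($T(p)$ lives one type above $p$), so it cannot be used for induction. You flag stratification as the delicate point only for the parity induction, but it is equally decisive here. Without this identity and the boundary analysis feeding its base case, the cardinal equation is asserted rather than proved.
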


The result is an implication from the stated model axioms. It does not assert that a model of these axioms exists; nor does it assert $\mathrm{Con}(\mathrm{NF})$. It also does not identify the displayed finiteness predicate with every other notion of finiteness unless the relevant equivalence is separately invoked. The supplied formalization contains an independent Tarski-finiteness appendix, which is not required for Theorem~\ref{thm:main}.

\section{Theory, coding, and finite sets}\label{sec:foundations}
Let $V$ denote the internal universal set, existing because $x=x$ is stratified and inhabited. We write $x\in A$ for membership and $A\subseteq B$ for $\forall x(x\in A\Rightarrow x\in B)$. The model assumptions are:
\begin{enumerate}[label=(\roman*),leftmargin=2em]
\item \emph{Extensionality:} sets with exactly the same members are equal.
\item \emph{Nontriviality:} there are distinct objects $a\neq b$.
\item \emph{No empty objects:} $\forall A\;\exists x\;(x\in A)$.
\item \emph{Inhabited stratified comprehension:} for a stratified formula $\varphi(x,\vec p)$, if $\exists x\,\varphi(x,\vec p)$ then there is a set $G$ satisfying $x\in G\leftrightarrow\varphi(x,\vec p)$.
\end{enumerate}

Quantification is over the model's objects. A formula is stratified if integer levels may be assigned to its variables such that equality preserves level and the right argument of membership has one more level than the left. The definition permits negation and quantification in $\varphi$; only \emph{inhabitedness of its extension} is demanded.

Singletons, unordered pairs, unions and nonempty differences are used only when constructed by an appropriate inhabited stratified instance. Relations and bijections mean \emph{actual internally represented sets of homogeneous Kuratowski pair codes}; no arbitrary external function is automatically an internal bijection. For sets $A,B$ let $A\simeq B$ mean that there is an internal bijection graph from $A$ to $B$.

\begin{definition}[Two-sided well-order]\label{def:biwo}
$\BiWO(R,A)$ means that the internally coded reflexive relation $R$ is linear, antisymmetric and transitive on $A$, and that every inhabited internally represented subset of $A$ has both an $R$-least and an $R$-greatest element. The order graph is a set of pair codes.
\end{definition}

\begin{definition}[Nontermination prohibition]\label{def:nont}
$\NoNT(A)$ asserts that no inhabited internally represented $D\subseteq A$ admits an internally represented relation $R$ on $D$ that is simultaneously irreflexive, transitive and nonterminating:
\[
\forall x\in D\;\exists y\in D\;R(x,y).
\]
\end{definition}

\begin{definition}[Finiteness]\label{def:finite}
\begin{equation}\label{eq:finite}
\Fin(A)\ \longleftrightarrow\
 \bigl(\exists R\,\BiWO(R,A)\bigr)\land\NoNT(A).
\end{equation}
\end{definition}

The proof proceeds by contradiction from
\begin{equation}\label{eq:allfinite}
\forall A\;\Fin(A).
\end{equation}
The order part permits standard finite minimum/maximum arguments; the $\NoNT$ part rules out an internally represented infinite chain of strict extensions. These are properties of the internal model, not appeals to the ambient set-theoretic meaning of finite.

\subsection{Consequences of the all-finite hypothesis}
\begin{lemma}[Finite pigeonhole]\label{lem:pigeon}
Under \eqref{eq:allfinite}, an internal injection from an object $A$ to itself is surjective. Consequently an object cannot be internally bijective with a proper internally represented subset of itself.
\end{lemma}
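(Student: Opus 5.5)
Suppose $f$ is an internal injection from $A$ to itself, given as a set of homogeneous pair codes, and suppose it is not surjective. Then some $a\in A$ lies outside the range of $f$. We will use the $\NoNT$ clause of $\Fin(A)$ from \eqref{eq:allfinite} to reach a contradiction, by building an internally represented, irreflexive, transitive and nonterminating relation on an inhabited subset of $A$.

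The natural candidate is the forward orbit of $a$ under $f$. Let $D$ be the intersection of all $X\subseteq A$ with $a\in X$ and $f``X\subseteq X$. This condition is stratified once $f$ is treated as a parameter at the appropriate level. The set $D$ is inhabited, since it contains $a$, so inhabited comprehension supplies it. On $D$ define
\[
R(x,y)\iff y\in C_x\setminus\{x\},
\]
where $C_x$ is the least $f$-closed set containing $f(x)$. That is, $y$ is a strict $f$-iterate of $x$. Because the pair codes are homogeneous, $R$ is stratified with $x,y$ at the same level, so it is an internally represented set of pair codes. It is inhabited, since $R(a,f(a))$ holds, which requires $f(a)\neq a$; this follows because $a\notin\operatorname{ran} f$.

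The three required properties are then checked as follows.

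\emph{Nontermination.} For every $x\in D$ we have $R(x,f(x))$, provided $f(x)\neq x$. This is easier to guarantee with a cleaner definition: set $R(x,y)$ iff $y\in C_x$, where $C_x$ is the least $f$-closed set containing $f(x)$, and drop the removal of $x$.

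\emph{Transitivity.} This is immediate from closure: if $y\in C_x$ then $C_y\subseteq C_x$.

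\emph{Irreflexivity.} This is the substantive step. It amounts to showing that $x\notin C_x$ for every $x\in D$.

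We prove irreflexivity by a minimality argument on $D$. Let $E=\{x\in D: x\in C_x\}$ and suppose $E$ is inhabited.
\begin{itemize}
\item We have $a\notin C_a$. Indeed, $C_a\setminus\{a\}$ contains $f(a)$ and is $f$-closed: if $f(z)=a$ this would contradict $a\notin\operatorname{ran} f$. So $C_a\subseteq C_a\setminus\{a\}$, which excludes $a$.
\item Suppose $f(x)\in E$ with $x\in D$. Then $f(x)\in C_{f(x)}$. One shows $C_x=\{f(x)\}\cup C_{f(x)}$, and injectivity gives that $x\in C_x$ forces $x=f(w)$ for some $w\in C_x$ or $x=f(x)$. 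Pulling back along $f$, we get: if $f(x)\in C_{f(x)}$ then $x\in C_x$, or $x=a$.
\end{itemize}
So $E$ is closed under the preimage steps, and by the closure characterisation of $D$ this yields a contradiction. Concretely, $D\setminus E$ contains $a$ and is $f$-closed by the injectivity argument, so it equals $D$, and hence $E$ is empty.

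The main obstacle is making each set used here ($C_x$ uniformly in $x$, $D\setminus E$, and the differences) exist via \emph{inhabited} stratified comprehension. This means checking that every defining formula is stratified with consistent levels for the pair-code parameter $f$. It also means checking that each is inhabited, or else handling the empty case separately, for instance by not forming $E$ at all and arguing about $D\setminus E$, which contains $a$.

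The second sentence of Lemma~\ref{lem:pigeon} then follows. If $g\colon A\to B$ is an internal bijection with $B\subsetneq A$, then $g$, viewed as a map $A\to A$, is an internal injection whose range $B$ is not all of $A$. The same graph of pair codes serves, so no new set needs to be formed. This contradicts the first part.
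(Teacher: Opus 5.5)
Your proposal is correct and follows essentially the same route as the paper's proof: the least $f$-closed subset of $A$ containing the omitted point, the strict forward-reachability relation on it (irreflexive by injectivity plus omission, transitive by closure, nonterminating via $f$), a contradiction with $\NoNT$, and the consequence obtained by reading a bijection onto a proper subset as a non-surjective self-injection. One small repair is needed: drop the disjunct ``or $x=a$'' in the pull-back step, because as written it leaves the case $x=a$ of closure unproved, namely $f(a)\notin C_{f(a)}$; your own observation that every element of $C_y$ is $f(w)$ for some $w\in\{y\}\cup C_y$, applied at $y=f(x)$ and combined with injectivity, gives $f(x)\in C_{f(x)}\Rightarrow x\in C_x$ unconditionally, which is exactly what makes $D\setminus E$ $f$-closed at $a$ as well.
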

\begin{proof}[Proof idea, including the formal construction]
If an injection $f\colon A\to A$ omits $a\in A$, take the least internally represented $f$-closed subset containing $a$. On it form the strict forward-reachability relation induced by $f$. Injectivity and the omission of $a$ make reachability irreflexive; reachability is transitive and every point has an $f$-successor. The closure and reachability sets are built by stratified internal codes in the formalization, and they contradict $\NoNT(A)$ (more precisely its restriction to the inhabited closure). For the consequence, compose a bijection $A\to B\subsetneq A$ with inclusion $B\hookrightarrow A$.
\end{proof}

\begin{lemma}[Cardinal comparison and finite extrema]\label{lem:compare}
Assuming \eqref{eq:allfinite}, any two positive Frege cardinals are comparable under internal injection. Every inhabited internal set of such cardinals has a least and a greatest member.
\end{lemma}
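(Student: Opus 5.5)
To prove Lemma~\ref{lem:compare}, I would first fix representatives: a positive Frege cardinal is an inhabited equivalence class $[A]=\{B: B\simeq A\}$, which exists by inhabited stratified comprehension because $A\in[A]$ and ``$B\simeq A$'' is stratified once Kuratowski pair codes are homogeneous. Write $[A]\le[B]$ when some (equivalently, every) member of $[A]$ injects internally into some member of $[B]$. For comparability I would take $A\in[A]$ and $B\in[B]$ and use the well-orders from \eqref{eq:allfinite}: fix $R$ with $\BiWO(R,A)$ and $S$ with $\BiWO(S,B)$. I then consider the set $\mathcal{J}$ of internal partial bijections $g$ that are order-initial: $\operatorname{dom} g$ is an $R$-initial segment of $A$, $\operatorname{ran} g$ is an $S$-initial segment of $B$, and $g$ is order-preserving. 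This set is stratified in the parameters $R,S,A,B$. It is inhabited because the single pair consisting of the $R$-least element of $A$ and the $S$-least element of $B$ lies in it, and that pair exists by the least-element clause applied to $A$ and $B$ themselves.

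The key steps, in order, are these:
\begin{enumerate}[label=(\arabic*)]
\item Show that any two members of $\mathcal{J}$ are $\subseteq$-comparable. If not, the set of points where they differ is inhabited and internally represented, so it has an $R$-least point, and at that point the two maps would have to agree by order-initiality.
\item Show that if $g\in\mathcal{J}$ has domain a proper subset of $A$ and range a proper subset of $B$, then $g$ extends by one pair, namely the least element of $A\setminus\operatorname{dom}g$ paired with the least element of $B\setminus\operatorname{ran}g$. Both differences are inhabited, so they exist by comprehension.
\item Apply $\NoNT$, through its companion $\NoNT(\mathcal{J})$ coming from $\Fin(\mathcal{J})$, to the strict-extension relation $g\subsetneq h$ on $\mathcal{J}$. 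This relation is irreflexive and transitive, so it cannot be nonterminating. Hence some $g\in\mathcal{J}$ has no proper extension, and by step~(2) either $\operatorname{dom} g=A$ or $\operatorname{ran} g=B$. This yields an injection $A\to B$ or $B\to A$.
\end{enumerate}
Alternatively, in step~(3) I could use the $\subseteq$-greatest element of $\mathcal{J}$ under a $\BiWO$ on $\mathcal{J}$. The $\NoNT$ route, however, avoids needing the $\BiWO$ order to agree with inclusion.

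For antisymmetry I would use Lemma~\ref{lem:pigeon}. Given injections $A\to B$ and $B\to A$, their composite is an injection $A\to A$, which is therefore surjective. Then $A\to B$ is surjective as well, so $A\simeq B$. Thus $\le$ is a linear order on positive Frege cardinals.

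For the extrema, let $\mathcal{K}$ be an inhabited internal set of positive cardinals. By \eqref{eq:allfinite} we have $\Fin(\mathcal{K})$, so $\NoNT(\mathcal{K})$ holds. The strict relation $<$ restricted to $\mathcal{K}$ is internally representable and stratified, since both sides are cardinals at the same level. It is irreflexive and transitive. If $\mathcal{K}$ had no greatest element, linearity would give every $\kappa\in\mathcal{K}$ some $\lambda\in\mathcal{K}$ with $\kappa<\lambda$, making $<$ nonterminating, which is a contradiction. Applying the same argument to the converse relation $>$ gives a least element. Neither use of $\NoNT$ requires a $\BiWO$ order on $\mathcal{K}$.

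I expect the main obstacle to be stratification bookkeeping rather than combinatorics. The relation ``$g\in\mathcal{J}$'' quantifies over pairs inside $g$ and over elements of $A$ and $B$. Relating $[A]\le[B]$ to injections between representatives also places the cardinals one level above $A$ and $B$. I would check carefully that ``$\kappa<\lambda$'', expressed as ``$\exists A\in\kappa\,\exists B\in\lambda\,\exists f$ injective $A\to B$ and $\neg\exists$ bijection'', is homogeneous in $\kappa$ and $\lambda$. I would also check that every set invoked above is inhabited, so that comprehension applies. For example, $B\setminus\operatorname{ran}g$ is used only when the range is proper, and the inhabitant of the $<$-graph is supplied by the assumed failure of a greatest element.
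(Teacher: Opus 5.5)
Your proposal is correct and follows essentially the same route as the paper. Applying $\NoNT$ to the strict-extension relation on an inhabited internal set of partial bijections gives a maximal one, which must exhaust $A$ or $B$ because otherwise a one-pair extension exists; antisymmetry comes from the finite pigeonhole lemma, and the extrema come from $\NoNT$ applied to the strict cardinal order. Your restriction to order-initial, order-preserving maps (and your step~(1), which step~(3) never uses) is harmless but unnecessary. The paper takes all nonempty partial bijections and adjoins an arbitrary unused matched pair, so the $\BiWO$ half of finiteness plays no role in this lemma.
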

\begin{proof}[Proof idea]
Consider the inhabited internal collection of nonempty partial bijections between representatives $A,B$, ordered by proper extension. An internally represented, irreflexive, transitive, nonterminating extension relation would contradict $\NoNT$ of this collection; hence a maximal partial bijection exists. If both its domain and its range were proper, one could adjoin one unused matched pair, contradicting maximality. Thus one representative injects into the other. Antisymmetry follows using internal injection transport and the finite pigeonhole lemma. If an inhabited internal collection of cardinals had no greatest (respectively least) member, the internally coded strict greater-than (respectively smaller-than) relation would be nonterminating, contradicting $\NoNT$.
\end{proof}

These lemmas are instantiated by actual stratified formula codes in the formal development; merely writing ``the set of all partial bijections'' would not by itself establish their availability in this theory.

\section{Positive Frege cardinals and type-raising}\label{sec:card}
For inhabited $A$ the \emph{positive Frege cardinal} is
\[
|A|\defeq\{B:B\simeq A\}.
\]
This class has representative $A$ and hence is an object by the internal coding argument. Write $1_F=|\{a\}|$ and $2_F=|\{a,b\}|$ for $a\neq b$. Their definitions are independent of the chosen representatives. We use $p<q$ for internal injection in one direction but not equinumerosity.

Set $\USC(A)\defeq\{\{x\}:x\in A\}$ and define type-raising by
\begin{equation}\label{eq:T}
T(|A|)\defeq|\USC(A)|.
\end{equation}
For $A$ and any fresh $p$, write $|A|+1_F$ for $|A\cup\{p\}|$; in the argument this is the internal successor relation, not a postulated arithmetic operator on external natural numbers. The formalization proves:
\begin{equation}\label{eq:Tlaws}
T(1_F)=1_F,\quad T(2_F)=2_F,\quad
p<q\Rightarrow T(p)<T(q),\quad
T(p+1_F)=T(p)+1_F
\end{equation}
whenever the indicated expressions are represented. No global axiom $T(p)=p$ is used; in NF this would be an illicit use of the Axiom of Counting.

\section{Positive exponentiation and its boundary}\label{sec:exponent}
For inhabited $A$ define the \emph{positive power object}
\[
\Powp(A)\defeq\{X:X\subseteq A\text{ and }\exists x\in X\},
\]
and the partial Frege-cardinal exponentiation $E$ by
\begin{equation}\label{eq:E}
E\bigl(|\USC(A)|\bigr)=|\Powp(A)|.
\end{equation}
The qualification ``partial'' matters: in general not every positive Frege cardinal is represented in the form $|\USC(A)|$. The source formalization supplies an actual internal relation graph for $E$, proves functionality, and verifies the properties below. The use of the nonempty power object ensures the defining comprehension is inhabited without assuming an empty set.

\begin{lemma}[Positive exponentiation laws]\label{lem:exp}
Under \eqref{eq:allfinite}, for represented arguments above $1_F$:
\begin{align}
 E(p)\downarrow\ \&\Longrightarrow\ p<E(p),\label{eq:exp-growth}\\
 1_F<p<q,\quad E(p),E(q)\downarrow\ \&\Longrightarrow\ E(p)<E(q).\label{eq:exp-mono}
\end{align}
There is also the typed compatibility $E(T(p))=T(E(p))$ whenever the expressions are defined in the required internal sense.
\end{lemma}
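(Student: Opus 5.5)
To prove Lemma~\ref{lem:exp}, I would work with representatives throughout. Write $p=|\USC(A)|$ with $A$ inhabited; then $E(p)=|\Powp(A)|$. For \eqref{eq:exp-growth} the natural injection is $\{x\}\mapsto\{x\}$, which sends $\USC(A)$ into $\Powp(A)$. Its graph $\{\langle\{x\},\{x\}\rangle : x\in A\}$ is stratified once homogeneous Kuratowski pairs are used: both coordinates sit at the same level, one above $x$. The graph is inhabited because $A$ is. So $p\le E(p)$.

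To exclude equinumerosity I would run Cantor's argument in its stratified form. Suppose $g\colon\USC(A)\to\Powp(A)$ is an internal bijection. Form $D=\{x\in A: x\notin g(\{x\})\}$; this is stratified, since $x$ and $g(\{x\})$ differ by one level. Two cases arise.
\begin{itemize}[leftmargin=2em]
\item If $D$ is inhabited, then $D\in\Powp(A)$, so $D=g(\{d\})$ for some $d$, and the usual contradiction follows.
\item If $D$ is uninhabited, then $x\in g(\{x\})$ for every $x\in A$. Here the hypothesis $p>1_F$ supplies two distinct $a,b\in A$. Consider the set $A\setminus\{a\}$, which is inhabited by $b$, and its preimage $g^{-1}(A\setminus\{a\})=\{c\}$. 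Since $c\in g(\{c\})$, we have $c\ne a$. Then modify $g$ by a transposition (swap the values at $\{c\}$ and $\{a\}$). This gives a new bijection $g'$ for which $a\notin g'(\{a\})$, so the corresponding $D'$ is inhabited. Now the first case applies to $g'$.
\end{itemize}
Strictness is all that is needed. Lemma~\ref{lem:pigeon} is not required here, although comparability from Lemma~\ref{lem:compare} lets us read $p<E(p)$ as "injects but not equinumerous."

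For \eqref{eq:exp-mono}, let $q=|\USC(B)|$. From $p<q$ we get an internal injection $h\colon\USC(A)\to\USC(B)$. This induces an injection $h^\ast\colon A\to B$ with $\{h^\ast(x)\}=h(\{x\})$, and the induced graph is stratified because singleton levels shift uniformly. The direct image map $X\mapsto h^\ast[X]$ then injects $\Powp(A)$ into $\Powp(B)$ and preserves inhabitedness, so $E(p)\le E(q)$.

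Strictness is the main obstacle. In the all-finite setting I would argue as follows. Adjoining a point $b_0\in B\setminus h^\ast[A]$ shows that $|\Powp(B)|$ is at least $|\Powp(A)|$ plus a disjoint copy of it: the sets $h^\ast[X]\cup\{b_0\}$ are new, and so is $\{b_0\}$. Thus $\Powp(A)$ injects into a proper subset of $\Powp(B)$. If $E(p)=E(q)$ held, composing with a bijection back would give an injection of $\Powp(B)$ into a proper subset of itself, contradicting Lemma~\ref{lem:pigeon}. I expect the delicate point to be building the image-map graph and the proper-subset witness as inhabited stratified instances.

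The typed compatibility $E(T(p))=T(E(p))$ is a computation on representatives. The left side is $|\Powp(\USC(A))|$ and the right side is $|\USC(\Powp(A))|$. The bijection $X\mapsto\USC(X)$, equivalently $\{X\}\mapsto\{\{x\}:x\in X\}$, between $\USC(\Powp(A))$ and $\Powp(\USC(A))$ is stratified with matched levels. It is inhabited, and its inverse sends $Y\mapsto\bigcup Y$. This gives the required equality.
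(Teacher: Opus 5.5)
Your proposal is correct. For \eqref{eq:exp-mono} it is essentially the paper's argument: decode the singleton injection to $h^\ast\colon A\hookrightarrow B$, push $\Powp(A)$ forward by direct image, note that the singleton $\{b_0\}$ of an unused point is missed, and apply Lemma~\ref{lem:pigeon}.

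The genuine difference is in \eqref{eq:exp-growth}. The paper gets strictness in one line from finite pigeonhole. For non-singleton $A$, $\USC(A)$ is literally a proper subset of $\Powp(A)$, since it omits $A$. So $\USC(A)\simeq\Powp(A)$ would contradict Lemma~\ref{lem:pigeon}, which rests on \eqref{eq:allfinite}. You instead prove a no-empty version of Cantor's theorem through the stratified diagonal $\{x\in A: x\notin g(\{x\})\}$. This needs no finiteness hypothesis, but it costs a case split: the diagonal may be uninhabited, and then it cannot be formed. Your transposition handles that case correctly, and it can be shortened. If $x\in g(\{x\})$ for all $x\in A$, surjectivity forces $g(\{y\})=\{y\}$ for every $y\in A$, so $A$ itself is never hit. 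That is precisely the paper's observation.

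Your route gives $p<E(p)$ as a theorem of the base theory alone. It also shows transparently why the hypothesis $p>1_F$ is needed: for a singleton $A$ one has $\USC(A)=\Powp(A)$. The paper's route is shorter, given that the pigeonhole machinery is already in place.

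For the typed compatibility you give the explicit recoding that the paper only attributes to the formalization. Make sure the graph is written in the homogeneous form $\{X\}\mapsto\{\{x\}:x\in X\}$ with inverse $Y\mapsto\{\bigcup Y\}$. The maps $X\mapsto\USC(X)$ and $Y\mapsto\bigcup Y$ literally shift level by one.
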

\begin{proof}[Principal cardinal-comparison argument]
Choose $p=|\USC(A)|$ and $q=|\USC(B)|$, and decode an internal injection of singleton representatives into $f:A\hookrightarrow B$. If $p<q$, its image $I=f[A]$ is proper in $B$. The direct-image operation gives an internal bijection $\Powp(A)\simeq\Powp(I)$. Moreover $\Powp(I)\subsetneq\Powp(B)$: for $b\in B\setminus I$ its singleton lies only in the latter. The finite pigeonhole lemma rules out equal cardinalities, giving \eqref{eq:exp-mono}. For strict growth, the singleton subsets of a non-singleton $A$ constitute a proper internally represented subset of $\Powp(A)$ (the set $A$ itself lies outside that singleton family). Identifying $\USC(A)$ with this family gives $p<E(p)$ by finite pigeonhole. Type-compatibility is a separate internally represented power-object recoding lemma in the formalization; its stratification is not inferred from cardinal comparison alone.
\end{proof}

Set
\[
\Omega\defeq|V|,\qquad \kappa\defeq T(\Omega)=|\USC(V)|,
\qquad \lambda\defeq T(\kappa).
\]
The exact boundary lemma is:
\begin{lemma}[Exponentiation boundary]\label{lem:boundary}
Under \eqref{eq:allfinite}, the represented positive exponentiation relation satisfies
\begin{equation}\label{eq:boundary}
E(\kappa)=\Omega,\qquad E(\lambda)=\kappa,
\qquad E(p)\downarrow\iff p\leq\kappa,
\qquad T(p)\leq\kappa.
\end{equation}
In particular $E(\Omega)$ is undefined.
\end{lemma}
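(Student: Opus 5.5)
We prove the four clauses of \eqref{eq:boundary} in the order $E(\kappa)=\Omega$, then $E(\lambda)=\kappa$, then $T(p)\le\kappa$, and finally the characterisation of the domain of $E$.

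\emph{First two equations.} By \eqref{eq:E} applied to $A=V$,
\[
E(\kappa)=E(|\USC(V)|)=|\Powp(V)|.
\]
Every member of $\Powp(V)$ is an object, so $\Powp(V)\subseteq V$. Conversely, every object is inhabited by the no-empty axiom, so every object $X$ satisfies $X\subseteq V$ and $\exists x\in X$. Hence $\Powp(V)=V$ by extensionality, and $E(\kappa)=|V|=\Omega$.

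For the second equation, $\lambda=T(\kappa)=|\USC(\USC(V))|$. Applying \eqref{eq:E} to $A=\USC(V)$ gives $E(\lambda)=|\Powp(\USC(V))|$. The typed compatibility in Lemma~\ref{lem:exp} states $E(T(\kappa))=T(E(\kappa))$, so
\[
E(\lambda)=T(E(\kappa))=T(\Omega)=\kappa.
\]
To avoid depending on the definedness side conditions of that clause, I would also check directly that $\Powp(\USC(V))\simeq\USC(\Powp(V))$. The map $X\mapsto\{\bigcup X\}$ is a stratified bijection, and $\USC(\Powp(V))=\USC(V)$.

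\emph{Bound on $T$.} Let $p=|A|$. Then $\USC(A)\subseteq\USC(V)$, and inclusion is an internal injection, so $T(p)\le\kappa$ for every $p$.

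\emph{Domain of $E$.} For the forward direction, suppose $E(p)\downarrow$. Then $p=|\USC(A)|=T(|A|)$ for some $A$, so $p\le\kappa$ by the bound just proved. The converse is the real difficulty: we must show that every cardinal $p\le\kappa$ has the form $|\USC(A)|$. If $p\le\kappa$, choose $B\in p$ and an internal injection $B\hookrightarrow\USC(V)$ with image $J$. Set $A=\bigcup J=\{x:\{x\}\in J\}$; this is stratified and inhabited. Then $J=\USC(A)$, and $p=|J|=|\USC(A)|$, so $E(p)$ is defined.

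The main obstacle is checking that these constructions are genuinely internal. Specifically, one must verify that the transported injection and the set $A$ are given by inhabited stratified instances. The relation ``$\{x\}\in J$'' puts $x$ one level below $J$'s members, which is fine. The bijections used must relate homogeneous pair codes, and in the recoding $X\mapsto\{\bigcup X\}$ both sides sit at the same type, so this should go through.

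\emph{Undefinedness of $E(\Omega)$.} By the characterisation just proved, $E(\Omega)$ is defined only if $\Omega\le\kappa$. Suppose $\Omega\le\kappa$. Then $\kappa\le\Omega$ holds trivially since $\USC(V)\subseteq V$, so antisymmetry from Lemma~\ref{lem:compare} gives $\Omega=\kappa$. But then $E(\kappa)=\Omega=\kappa$, contradicting the growth law \eqref{eq:exp-growth}, which applies because $\kappa>1_F$ by nontriviality. Hence $E(\Omega)$ is undefined.
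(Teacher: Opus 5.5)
Your proposal is correct and follows the paper's proof essentially step for step. It uses the same ingredients: $\Powp(V)=V$, the homogeneous recoding $X\mapsto\{\bigcup X\}$ identifying $\Powp(\USC(V))$ with $\USC(V)$, the inclusion $\USC(A)\subseteq\USC(V)$ for $T(p)\le\kappa$, and decoding the image $J$ of an injection into $\USC(V)$ as $\USC(\bigcup J)$. The only variation is the final step: you rule out $\Omega\le\kappa$ via antisymmetry and the growth law at $\kappa$, whereas the paper directly notes $\USC(V)\subsetneq V$ and applies finite pigeonhole to get $\kappa<\Omega$. Since the growth law at $A=V$ is itself proved from exactly that proper inclusion, the two arguments coincide.
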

\begin{proof}[Explanation of the boundary]
Since every object is a nonempty subset of $V$, $\Powp(V)=V$; this yields $E(\kappa)=\Omega$. A nonempty $X\subseteq\USC(V)$ is the singleton image of its decoded set $A=\bigcup X$. The homogeneous internal recoding $X\mapsto\{A\}$ identifies $\Powp(\USC(V))$ with $\USC(V)$, yielding $E(\lambda)=\kappa$. Every singleton image injects into $\USC(V)$, explaining $T(p)\leq\kappa$. Conversely, given $p\leq\kappa$, an internally represented injection into $\USC(V)$ and its nonempty image give, by decoding, a representative $A$ with $p=|\USC(A)|$. This is the domain characterization. Finally $\USC(V)$ is a proper subset of $V$ (there is a genuine two-element object), so $\kappa<\Omega$ by finite pigeonhole. Therefore $E(\Omega)$ is outside the domain. In particular, the equality $E(\lambda)=\kappa$ is a \emph{typed recoding} assertion and must not be replaced by the naive untyped identity $\Powp(\USC(V))=V$.
\end{proof}

\section{The least exponentiation orbit}\label{sec:orbit}
For each positive cardinal $p$ let $\Orb(p)$ be the \emph{least internally represented} set of positive cardinals containing $p$ and closed under the internal graph of $E$. This is constructed by a stratified intersection/closure formula with a specific witness in the formal development, not by an unrestricted second-order intersection. Informally its elements are $p,E(p),E^2(p),\ldots$ until exponentiation becomes undefined.

\begin{lemma}[Orbit splitting]\label{lem:split}
If $p>1_F$ and $E(p)$ is undefined, then $\Orb(p)=\{p\}$. If $q=E(p)$ is defined, then
\begin{equation}\label{eq:split}
\Orb(p)=\{p\}\mathbin{\dot\cup}\Orb(q),\qquad
|\Orb(p)|=|\Orb(q)|+1_F.
\end{equation}
The corresponding decomposition for $\Orb(T(p))$ is valid when its edge exists.
\end{lemma}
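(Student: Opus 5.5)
The plan is to argue directly from the defining property of $\Orb(p)$ as the least internally represented set of positive cardinals that contains $p$ and is closed under the graph of $E$. For the first clause, suppose $p>1_F$ and $E(p)$ is undefined. The singleton $\{p\}$ is an object, since it is given by the stratified inhabited instance $x=p$. It contains $p$, and it is vacuously closed under $E$: the only element is $p$, and it has no $E$-successor. Minimality then gives $\Orb(p)\subseteq\{p\}$, and since $p\in\Orb(p)$ we get equality. The only care needed is to check that the stratified closure formula defining $\Orb$ quantifies over exactly such closed sets, so that $\{p\}$ is an admissible competitor.

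For the second clause, let $q=E(p)$. I would prove the two inclusions separately.

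\emph{First inclusion.} The set $\{p\}\cup\Orb(q)$ is formed by an inhabited stratified union. It contains $p$. It is $E$-closed: $E(p)=q\in\Orb(q)$, and $\Orb(q)$ is closed by definition. Minimality therefore gives $\Orb(p)\subseteq\{p\}\cup\Orb(q)$.

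\emph{Reverse inclusion.} It suffices that $\Orb(p)$ contains $q$ (true by closure) and is $E$-closed, so that $\Orb(q)\subseteq\Orb(p)$ by minimality of $\Orb(q)$.

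\emph{Disjointness.} We need $p\notin\Orb(q)$. Every element $r$ of $\Orb(q)$ satisfies $r\ge q$. To see this, the set $\{r\in\Orb(q): r\ge q\}$ contains $q$ and is $E$-closed, because $r<E(r)$ by \eqref{eq:exp-growth}; this needs $r>1_F$, which holds since $r\ge q>p>1_F$. Comparability from Lemma~\ref{lem:compare} makes $\ge$ meaningful, and minimality gives the claim. Since $p<q$ by \eqref{eq:exp-growth}, antisymmetry from Lemma~\ref{lem:compare} rules out $p\in\Orb(q)$. So the union is disjoint.

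\emph{Cardinality.} The cardinal statement $|\Orb(p)|=|\Orb(q)|+1_F$ then follows from the definition of $|A|+1_F$ as $|A\cup\{p'\}|$ for a fresh $p'$, with $p'=p$ and $A=\Orb(q)$.

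\emph{Type-raised version.} For $\Orb(T(p))$ I would rerun the same argument verbatim with $T(p)$ in place of $p$, assuming the edge $E(T(p))$ exists. The hypotheses needed are $T(p)>1_F$, which comes from $T(1_F)=1_F$ and strict monotonicity in \eqref{eq:Tlaws}, and the growth law applied to $T(p)$. Optionally one can identify the successor as $T(E(p))$ via the compatibility in Lemma~\ref{lem:exp}, but the decomposition itself does not need this.

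\emph{Main obstacle.} I expect the hardest part to be internal representability. Each competitor set must be built by an inhabited stratified comprehension: $\{p\}$, $\{p\}\cup\Orb(q)$, and especially $\{r\in\Orb(q):r\ge q\}$. In each case I must confirm that the formula "$X$ contains $c$ and is closed under the graph of $E$" is stratified at consistent levels. Cardinals, the $E$-graph of pair codes, and the sets of cardinals sit at fixed relative types, and both $E$ and $\le$ are homogeneous relations on cardinals. For that reason I expect the levels to line up, but this is the step to check carefully.
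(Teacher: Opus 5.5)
Your proposal is correct and follows the paper's proof. Minimality of $\Orb$ against the competitors $\{p\}$ and $\{p\}\cup\Orb(q)$ gives one inclusion, closure of $\Orb(p)$ gives the reverse, strict growth of $E$ gives disjointness, and the cardinal identity is read off from the one-point-extension meaning of $+1_F$. Your closure argument that every element of $\Orb(q)$ is $\ge q$ is a precise version of the paper's remark that strict growth prevents the orbit from returning to $p$; like the paper, it tacitly keeps $p>1_F$ in the second clause, which is genuinely needed because $E(1_F)=1_F$.
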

\begin{proof}
Strict growth prevents the orbit from returning to $p$. Closure shows that $\{p\}\cup\Orb(q)$ is an $E$-closed set containing $p$. Leastness gives one inclusion; closure of $\Orb(p)$ gives the reverse inclusion. The case without a successor has only its seed. Both arguments refer to the internal least-closure representation, and their Frege-cardinal forms use internally constructed one-point extension graphs.
\end{proof}

\begin{lemma}[Exact terminal orbit]\label{lem:terminal}
If $p>1_F$ and $E(p)$ is undefined, then
\begin{equation}\label{eq:terminal}
|\Orb(p)|=1_F,\qquad |\Orb(T(p))|=2_F.
\end{equation}
\end{lemma}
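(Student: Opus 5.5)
The first equality in \eqref{eq:terminal} comes straight from Lemma~\ref{lem:split}. Since $p>1_F$ and $E(p)$ is undefined, $\Orb(p)=\{p\}$. It remains to check $|\{p\}|=1_F$. This is the independence of $1_F$ from its representative: the internal graph $\{\langle a,p\rangle\}$ (a stratified, inhabited instance) is a bijection $\{a\}\simeq\{p\}$.

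For the second equality I will show that $T(p)$ has exactly one $E$-successor, and that this successor has none. The steps, in order:
\begin{enumerate}[label=(\alph*),leftmargin=2em]
\item By Lemma~\ref{lem:boundary}, $E(p)$ undefined means $p\not\leq\kappa$. By comparison (Lemma~\ref{lem:compare}), $\kappa<p$, so $p>\kappa$.
\item Also by Lemma~\ref{lem:boundary}, $T(p)\leq\kappa$, so $E(T(p))$ is defined. Write $r\defeq E(T(p))$.
\item Lemma~\ref{lem:split} then gives $\Orb(T(p))=\{T(p)\}\mathbin{\dot\cup}\Orb(r)$ and $|\Orb(T(p))|=|\Orb(r)|+1_F$.
\item Show $E(r)$ is undefined, i.e.\ $r>\kappa$.
\item Given (d), Lemma~\ref{lem:split} applied to $r$ gives $\Orb(r)=\{r\}$, which has cardinality $1_F$. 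Hence $|\Orb(T(p))|=1_F+1_F=2_F$. This last identity is the two-point set $\{a\}\cup\{b\}$ with $b$ fresh, using $a\neq b$ and the representative-independence of $2_F$.
\end{enumerate}

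The main obstacle is step (d). I would use typed compatibility, $E(T(p))=T(E(p))$, but $E(p)$ is undefined, so it does not apply directly. Instead I would argue via $\lambda$. From $p>\kappa$ and strict monotonicity of $T$ in \eqref{eq:Tlaws}, we get $T(p)>T(\kappa)=\lambda$. This requires $\kappa>1_F$, which holds because $\USC(V)$ contains two distinct singletons, and $p>1_F$ is given. Monotonicity of exponentiation \eqref{eq:exp-mono} then gives $r=E(T(p))>E(\lambda)=\kappa$, using $E(\lambda)=\kappa$ from Lemma~\ref{lem:boundary}. Hence $r\not\leq\kappa$, so $E(r)$ is undefined by the domain characterization. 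If the strict inequality $p>\kappa$ is only available as $p\geq\kappa$ with $p\neq\kappa$, I would combine antisymmetry from Lemma~\ref{lem:compare} with the fact that $E(\kappa)=\Omega$ is defined. That shows $p\neq\kappa$ and upgrades the inequality to strict.

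Two side conditions need checking. First, $r>1_F$, which is needed to apply Lemma~\ref{lem:split} to $r$. This follows from $r>\kappa>1_F$. Second, $T(p)>1_F$, which follows from $T(p)>\lambda\geq T(1_F)=1_F$. Finally, the disjointness in step (c) holds because $r\neq T(p)$, which is the strict-growth law \eqref{eq:exp-growth}.
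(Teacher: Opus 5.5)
Your proposal is correct and follows the paper's proof step for step. You get $p>\kappa$ from the boundary lemma plus comparison, $T(p)>\lambda$ by monotonicity of $T$, $T(p)\le\kappa$ so that $r=E(T(p))$ exists, $r>E(\lambda)=\kappa$ so that $E(r)$ is undefined, and then apply orbit splitting twice; your extra side-condition checks ($\lambda, T(p), r>1_F$, disjointness via strict growth, and the strictness of $p>\kappa$ via antisymmetry) are details the paper leaves implicit, and they are sound.
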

\begin{proof}
By Lemma~\ref{lem:boundary}, $p>\kappa$ (using comparison). Strict monotonicity of $T$ gives $T(p)>T(\kappa)=\lambda$, while $T(p)\leq\kappa$ ensures that $E(T(p))$ exists. From $E(\lambda)=\kappa$ and strict monotonicity of $E$ we obtain
\[
E(T(p))>E(\lambda)=\kappa.
\]
Hence $E(E(T(p)))$ is undefined. The type-raised orbit has exactly the two distinct elements $T(p)$ and $E(T(p))$; the original orbit is the singleton $\{p\}$. The internal orbit-splitting laws give \eqref{eq:terminal}.
\end{proof}

\section{The single-orbit cardinal equation}\label{sec:equation}
Construct a genuine two-element set $A=\{a,b\}$ with $a\neq b$; take the Frege representative $\USC(A)$ and its cardinal $2_F$. The positive power object $\Powp(A)$ and the internal exponentiation graph provide an outgoing edge from this seed. Let
\[
C\defeq\Orb(2_F).
\]
By \eqref{eq:allfinite}, $C$ is finite.

\begin{lemma}[The type-raised orbit identity]\label{lem:orbitidentity}
For every $p\in C$,
\begin{equation}\label{eq:recursion}
|\Orb(T(p))|=T\bigl(|\Orb(p)|\bigr)+1_F.
\end{equation}
\end{lemma}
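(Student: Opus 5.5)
We prove \eqref{eq:recursion} by an internal induction along the orbit $C$, running \emph{backwards} from its terminal element. This induction is legitimate: the set of $p\in C$ for which \eqref{eq:recursion} fails is given by a stratified condition. Both sides are Frege cardinals of the same type once $T$ is applied on the right, because $T$ adds exactly the one level that $\Orb(T(p))$ gains over $\Orb(p)$. Suppose this failure set $D$ is inhabited. Then, by the $\BiWO$ part of finiteness for $C$ (or Lemma~\ref{lem:compare} applied to $D\subseteq C$), $D$ has a greatest element $p$ with respect to cardinal order.

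First we dispose of the \textbf{base case}, where $E(p)$ is undefined. Lemma~\ref{lem:terminal} gives $|\Orb(p)|=1_F$ and $|\Orb(T(p))|=2_F$. The laws \eqref{eq:Tlaws} then give
\[
T(|\Orb(p)|)+1_F=T(1_F)+1_F=1_F+1_F=2_F.
\]
The identity $1_F+1_F=2_F$ holds because adjoining a fresh point to a singleton yields a genuine two-element set. So $p\notin D$.

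Next comes the \textbf{inductive step}, where $q=E(p)$ is defined. By Lemma~\ref{lem:split} we have $q\in\Orb(p)\subseteq C$. By growth \eqref{eq:exp-growth} we have $q>p$, so $q\notin D$ by the maximality of $p$. Hence $|\Orb(T(q))|=T(|\Orb(q)|)+1_F$. Next we need $E(T(p))=T(q)$, which is the typed compatibility $E(T(p))=T(E(p))$ of Lemma~\ref{lem:exp}. Its definedness follows from Lemma~\ref{lem:boundary}: $T(p)\le\kappa$ always holds. Splitting the type-raised orbit by Lemma~\ref{lem:split} then gives
\[
|\Orb(T(p))|=|\Orb(T(q))|+1_F=\bigl(T(|\Orb(q)|)+1_F\bigr)+1_F .
\]
On the other side, splitting the original orbit gives $|\Orb(p)|=|\Orb(q)|+1_F$. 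The successor law $T(x+1_F)=T(x)+1_F$ in \eqref{eq:Tlaws} then yields
\[
T(|\Orb(p)|)+1_F=\bigl(T(|\Orb(q)|)+1_F\bigr)+1_F .
\]
The two sides agree, so $p\notin D$, which is a contradiction. Hence $D$ is not inhabited, and \eqref{eq:recursion} holds for every $p\in C$. We also need every $p\in C$ to satisfy $p>1_F$ so that the lemmas apply. This follows since $2_F>1_F$ and $E$ is strictly increasing along the orbit.

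I expect the main obstacle to be the stratification and well-definedness bookkeeping rather than the arithmetic. We must check three things. First, the map $p\mapsto|\Orb(T(p))|$ is given by a stratified formula, so that $D$ exists by inhabited comprehension; if $D$ is empty we simply conclude directly rather than form it. Second, the expression $x+1_F$ is well defined on cardinals: a fresh point exists outside any representative, which uses the fact that the representatives of $|\Orb(q)|$ are not all of $V$, via $\kappa<\Omega$ and finite pigeonhole. Third, the definedness hypotheses required by Lemma~\ref{lem:exp} and by the ``edge exists'' clause of Lemma~\ref{lem:split} are satisfied. If the greatest-element route gives trouble with types, an alternative is to invoke $\NoNT(C)$ directly: the relation ``$x\in D$, $y\in D$, $y=E(x)$'' is irreflexive by growth, and the inductive step shows it is nonterminating on $D$. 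Making it transitive requires passing to orbit-reachability, which is again available from the least-closure construction of $\Orb$.
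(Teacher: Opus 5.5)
Your proposal is correct and follows the paper's proof essentially step for step. You take a greatest counterexample $p\in C$, settle the terminal case by Lemma~\ref{lem:terminal} and $T(1_F)+1_F=2_F$, and otherwise combine maximality at $q=E(p)$ with orbit splitting, $E(T(p))=T(E(p))$ and $T(x+1_F)=T(x)+1_F$. One small correction: the greatest element must be taken in the cardinal order via Lemma~\ref{lem:compare}. The $\BiWO$ witness for $C$ is an arbitrary internal well-order, and its greatest element need not be compatible with the comparison $q>p$ you use.
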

\begin{proof}
Define $P(p)$ to be \eqref{eq:recursion} and take the internally represented set of counterexamples in $C$. If it were inhabited, its finite greatest element $p$ would exist by Lemma~\ref{lem:compare}. If $E(p)$ is undefined, Lemma~\ref{lem:terminal} gives $|\Orb(T(p))|=2_F=T(1_F)+1_F$, proving $P(p)$. Otherwise let $q=E(p)>p$; then $q\in C$, so maximality gives $P(q)$. Orbit splitting and $T$-compatibility yield
\begin{align*}
 |\Orb(T(p))|
 &=|\Orb(T(q))|+1_F\\
 &=T\bigl(|\Orb(q)|\bigr)+1_F+1_F\\
 &=T\bigl(|\Orb(q)|+1_F\bigr)+1_F\\
 &=T\bigl(|\Orb(p)|\bigr)+1_F,
\end{align*}
contradicting the choice of $p$. The complete Lean proof supplies a stratified formula for the counterexample set and the necessary internal Frege representative transports.
\end{proof}

\begin{corollary}[Decisive equation]\label{cor:equation}
Writing $n\defeq|C|$, one has
\begin{equation}\label{eq:decisive}
\boxed{\quad n=T(n)+1_F.\quad}
\end{equation}
\end{corollary}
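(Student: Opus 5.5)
The plan is to instantiate Lemma~\ref{lem:orbitidentity} at the seed $p=2_F$ and then identify the type-raised orbit $\Orb(T(2_F))$ with $C$ itself. By \eqref{eq:Tlaws} we have $T(2_F)=2_F$. Hence $\Orb(T(2_F))=\Orb(2_F)=C$, and the left-hand side of \eqref{eq:recursion} at $p=2_F$ is $|C|=n$. The right-hand side is $T(|\Orb(2_F)|)+1_F=T(|C|)+1_F=T(n)+1_F$. This gives \eqref{eq:decisive} at once, provided the lemma applies at $p=2_F$.

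The steps, in order, are as follows.
\begin{enumerate}[label=(\arabic*),leftmargin=2em]
\item Check that $2_F\in C$. This holds because $\Orb(2_F)$ contains its seed by construction of the least closure.
\item Apply Lemma~\ref{lem:orbitidentity} with $p=2_F$.
\item Rewrite $T(2_F)$ as $2_F$ using \eqref{eq:Tlaws}. The orbit operator is defined on cardinals, so equal seeds give literally the same internally represented least-closure set.
\item Substitute $n=|C|$ on both sides.
\end{enumerate}

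One point to confirm is that $|C|$ is a positive Frege cardinal, which needs $C$ to be inhabited; it is, since it contains $2_F$. We also need $T(n)$ and the successor $T(n)+1_F$ to be represented. Representedness of the successor is part of the conclusion of Lemma~\ref{lem:orbitidentity}, since there the right-hand side is asserted to be a cardinal equal to $|\Orb(T(p))|$.

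The main obstacle is step (3). Equality $T(2_F)=2_F$ of Frege cardinals has to transport to equality of the orbit objects, and through that to equality of their cardinals. This is immediate provided $\Orb$ is a well-defined internal function of the cardinal rather than of a chosen representative such as $\USC(A)$. If that is not automatic, I would argue via extensionality. Both $\Orb(T(2_F))$ and $\Orb(2_F)$ are the least $E$-closed sets containing the same object, so each is contained in the other and they coincide. Their Frege cardinals then coincide trivially.

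Beyond this, no new construction is required: the whole content of the corollary sits in the orbit identity and the fixed-point law $T(2_F)=2_F$.
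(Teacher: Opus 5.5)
Your proposal is correct and matches the paper's proof: you apply Lemma~\ref{lem:orbitidentity} at the seed $p=2_F\in C$ and use $T(2_F)=2_F$, so that $\Orb(T(2_F))$ is literally the object $C$, which gives $n=T(n)+1_F$. The worry in your step (3) is harmless but unnecessary: $\Orb$ is defined directly on the cardinal object, so equal seeds give the same orbit by substitution of equals, and no extensionality argument is needed.
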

\begin{proof}
Apply Lemma~\ref{lem:orbitidentity} at $p=2_F$. Since $T(2_F)=2_F$, the two orbit arguments agree; hence $|\Orb(T(2_F))|=|C|=n$.
\end{proof}

\section{Detailed parity proof}\label{sec:parity}
This section supplies the crucial step that an abbreviated presentation might conceal. It is essential to use \emph{internally represented} bijection graphs and an \emph{internally coded} induction predicate, rather than relying on ordinary arithmetic induction for external natural numbers.

\begin{definition}[Internal evenness]\label{def:even}
An inhabited internal set $A$ is \emph{even}, written $\Even(A)$, if there are internal sets $X,Y$ and an internal bijection graph $f:X\simeq Y$ such that
\begin{equation}\label{eq:even}
A=X\mathbin{\dot\cup}Y.
\end{equation}
In the no-empty model both halves are inhabited. We use $\neg\Even(A)$ as the contrasting parity; no independent ``odd partition'' predicate or parity-exhaustion theorem is necessary.
\end{definition}

Write $A\rightsquigarrow B$ when, for a fresh point $p\notin A$, $B=A\cup\{p\}$ internally; the fresh point is part of the assertion. Two consecutive extensions $A\rightsquigarrow B\rightsquigarrow C$ add distinct fresh points $p,q$.

\begin{lemma}[Two-point invariance]\label{lem:twopoint}
If $A\rightsquigarrow B\rightsquigarrow C$, then
\begin{equation}\label{eq:two}
\Even(A)\iff\Even(C).
\end{equation}
\end{lemma}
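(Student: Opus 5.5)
We prove the two directions separately by explicit graph surgery, writing $C=A\cup\{p,q\}$ with $p\notin A$, $q\notin A\cup\{p\}$ and $p\neq q$. No cardinal arithmetic is needed here, only the manipulation of internal bijection graphs.

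For the forward direction, suppose $\Even(A)$ is witnessed by $A=X\mathbin{\dot\cup}Y$ and an internal bijection graph $f:X\simeq Y$. Put $X'=X\cup\{p\}$, $Y'=Y\cup\{q\}$ and $f'=f\cup\{\langle p,q\rangle\}$. Freshness gives $p,q\notin X\cup Y$ and $p\neq q$. Hence $X'$ and $Y'$ are disjoint, their union is $C$, and $f'$ is still a bijection graph. Each of $X'$, $Y'$ and $f'$ is the extension of an inhabited stratified formula with parameters $X,Y,f,p,q$; the pair code $\langle p,q\rangle$ sits at the same level as the codes in $f$ because the pairs are homogeneous. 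So comprehension supplies them, and $\Even(C)$ follows.

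The converse is the main obstacle. Suppose $C=X\mathbin{\dot\cup}Y$ with $f:X\simeq Y$. We must delete $p$ and $q$ and repair the matching so that it remains a bijection between inhabited halves of $A$. The plan is a case split on where $p$ and $q$ lie.

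\begin{enumerate}[label=(\alph*),leftmargin=2em]
\item \emph{$f$ matches $p$ with $q$}, that is, $f(p)=q$ or $f(q)=p$. Remove that pair, and remove $p$ and $q$ from the halves.
\item \emph{$p$ and $q$ lie in different halves but are not matched.} Say $p\in X$ and $q\in Y$, with $f(p)=y$ and $f(x)=q$. Rewire by deleting $\langle p,y\rangle$ and $\langle x,q\rangle$ and adding $\langle x,y\rangle$. The symmetric case $q\in X$, $p\in Y$ is handled the same way.
\item \emph{$p$ and $q$ lie in the same half.} Say $p,q\in X$, with $f(p)=y_1$ and $f(q)=y_2$, where $y_1\neq y_2$. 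Remove $p$ and $q$ from $X$, move $y_2$ from $Y$ to $X$, and drop both pairs. Then $y_2$ must be matched with some partner, which creates a new pairing problem. Pure deletion does not work here: one half loses two points while the other loses none.
\end{enumerate}

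The fix for case~(c) is to transfer the pair $\{y_1,y_2\}$ wholesale. Move $y_2$ to the $X$ side and pair it with $y_1$, so that $X_0=(X\setminus\{p,q\})\cup\{y_2\}$, $Y_0=Y\setminus\{y_2\}$ and $f_0=(f\setminus\{\langle p,y_1\rangle,\langle q,y_2\rangle\})\cup\{\langle y_2,y_1\rangle\}$. One checks that $f_0$ is a bijection $X_0\simeq Y_0$: every element of $Y\setminus\{y_1,y_2\}$ keeps its old preimage, and $y_1$ gets $y_2$. The case $p,q\in Y$ is symmetric, using $f^{-1}$.

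In every case each new set is defined by a stratified formula in the parameters $X,Y,f,p,q$ and the named points, with all pairs homogeneous. The remaining step is to verify that each of these sets is inhabited, as inhabited comprehension and the no-empty convention require.
\begin{itemize}
\item In cases (b) and (c) the repaired halves contain $x$ or $y_1,y_2$, so they are inhabited.
\item In case (a) inhabitedness needs $A$ to be inhabited. This holds because $A$ is an object and every object has a member. It is also consistent with the paper's convention that $\Even$ applies only to inhabited $A$.
\item In case (a), if $X\setminus\{p\}$ (or the relevant half) were empty, then $C=\{p,q\}$ and hence $A=\varnothing$, which is impossible.
\end{itemize}
So all the comprehension instances are legitimate, and $\Even(A)$ follows.
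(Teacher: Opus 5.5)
Your proposal is correct and follows the paper's proof essentially step for step. The forward direction is the same extension by the edge $p\mapsto q$. The converse uses the same three graph-surgery cases: restriction when $p,q$ are matched across halves; the reroute $x\mapsto y$ (the paper's $u\mapsto v$) when they lie in opposite halves unmatched; and moving one image point to the other half and matching it to the other image when both lie in the same half. Your case~(c) is the paper's Case~1 up to swapping the roles of $y_1,y_2$. Your inhabitedness checks, including the observation that an empty surviving half in case~(a) would force $A=C\setminus\{p,q\}$ to be empty, match the paper's remark in its Case~2.
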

\begin{proof}
\emph{Forward implication.} Suppose $A=X\dot\cup Y$ and $f:X\simeq Y$. Insert the first new point $p$ into $X$ and the second $q$ into $Y$; extend the graph of $f$ by the edge $p\mapsto q$. This graph represents an internal bijection witnessing that $C$ is even. The extension code has the new edge as its explicit inhabitance witness for comprehension.

\emph{Reverse implication.} Suppose $C=X\dot\cup Y$ with bijection $f:X\to Y$. Remove the newly adjoined distinct points $p,q$, leaving the inhabited set $A=C\setminus\{p,q\}$. There are three substantive arrangements, up to interchanging $X,Y$.

\smallskip\noindent\textbf{Case 1: both removed points lie in the same half.}
Suppose $p,q\in X$. Let $v=f(p)$ and $w=f(q)$; injectivity gives $v\neq w$. Define
\[
X'=(X\setminus\{p,q\})\cup\{v\},\qquad Y'=Y\setminus\{v\}.
\]
The surviving carrier $A$ is exactly $X'\dot\cup Y'$. Define $f':X'\to Y'$ by retaining the old edges at $x\in X\setminus\{p,q\}$ and replacing them by the single edge $v\mapsto w$. Old surviving images avoid $v,w$, hence $f'$ is a bijection. The symmetric subcase $p,q\in Y$ is obtained by using the internally constructed inverse graph $f^{-1}$.

\smallskip\noindent\textbf{Case 2: the removed points lie in opposite halves and are matched.}
Suppose $p\in X$, $q\in Y$, and $f(p)=q$. Take
\[
X'=X\setminus\{p\},\qquad Y'=Y\setminus\{q\},
\]
and restrict $f$ to the surviving sources. It is a bijection onto $Y'$. As $A$ is inhabited and consists of the disjoint union of these equal-sized surviving halves, each half is inhabited, as required by our restricted comprehension principle.

\smallskip\noindent\textbf{Case 3: the removed points lie in opposite halves but are not matched.}
Suppose $p\in X$, $q\in Y$, and $f(p)=v\neq q$. Let $u=f^{-1}(q)\neq p$. Again set $X'=X\setminus\{p\}$ and $Y'=Y\setminus\{q\}$, but repair the matching by
\begin{equation}\label{eq:reroute}
 f'\defeq\bigl(f\upharpoonright(X\setminus\{p,u\})\bigr)\cup\{(u,v)\}.
\end{equation}
Thus the old edges $p\mapsto v$ and $u\mapsto q$ are removed and the edge $u\mapsto v$ is inserted. The remaining arrows are unchanged. This is an internally represented bijection $X'\to Y'$.

In all cases the required nonempty pieces and replacement graph are produced by witnessed, stratified comprehensions. In particular the two-point construction never postulates an empty relation set. These are the explicit graph-surgery cases formalized in Runs~64--65, proving the converse and hence \eqref{eq:two}.
\end{proof}

\begin{lemma}[One-point reversal at a singleton]\label{lem:base}
If $A=\{p\}$ and $A\rightsquigarrow B$, then
\begin{equation}\label{eq:singleflip}
\Even(A)\iff\neg\Even(B).
\end{equation}
\end{lemma}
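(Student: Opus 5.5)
We prove the two implications separately, showing that $A=\{p\}$ is not even while $B=\{p,q\}$ is even. The biconditional \eqref{eq:singleflip} then holds because both sides are false, i.e.\ $\neg\Even(A)$ and $\Even(B)$.

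\emph{Step 1: $\neg\Even(\{p\})$.} Suppose $\{p\}=X\mathbin{\dot\cup}Y$ with $f:X\simeq Y$. By the no-empty-objects axiom both $X$ and $Y$ are inhabited, as remarked after Definition~\ref{def:even}. Pick $x\in X$ and $y\in Y$. Then $x,y\in\{p\}$, so $x=y=p$, which contradicts disjointness of $X$ and $Y$. This step needs only extensionality and the description of the singleton's members; the bijection is never used.

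\emph{Step 2: $\Even(\{p,q\})$.} Here $q\notin\{p\}$ is the fresh point and $B=\{p,q\}$. We take $X=\{p\}$ and $Y=\{q\}$, which are disjoint since $p\ne q$, and whose union is $B$ by extensionality. For the bijection graph we take the singleton $f=\{\langle p,q\rangle\}$ of the homogeneous Kuratowski pair code. The formula $z=\langle p,q\rangle$ is stratified once $p$ and $q$ are placed at a common level, and its extension is inhabited by the pair code itself, so inhabited comprehension supplies $f$. We then verify the bijection conditions. Totality on $X$ and surjectivity onto $Y$ are witnessed by the single edge. Functionality and injectivity hold because any edge in $f$ equals $\langle p,q\rangle$, and injectivity of pair codes lets us recover the components. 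The singletons $\{p\}$ and $\{q\}$ likewise come from the inhabited instances $x=p$ and $x=q$.

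\emph{Main obstacle.} The mathematics here is trivial, so the only real work is representation. We must check that the pair code and the one-edge graph are internal objects of the right type. We must also check that the paper's notion ``internal bijection graph from $X$ to $Y$'' is discharged by this one-edge graph. I would reuse the singleton-pair construction already employed as the ``explicit inhabitance witness'' in the forward half of Lemma~\ref{lem:twopoint}: in effect, Step~2 is that extension step applied with no prior edges, starting from the two singleton halves.
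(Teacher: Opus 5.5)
Your proposal is correct and follows the paper's proof: $\{p\}$ cannot be split into two disjoint inhabited halves, while $\{p,q\}$ splits into its two singletons matched by the one-edge graph $\{\langle p,q\rangle\}$, so both sides of \eqref{eq:singleflip} are false. Your check that the singletons, the pair code and the one-edge graph all come from inhabited stratified comprehension spells out what the paper leaves implicit; building the graph directly is also right, since no empty graph exists to extend, so your closing analogy with Lemma~\ref{lem:twopoint} is only heuristic.
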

\begin{proof}
$A$ cannot be split into two disjoint inhabited equinumerous halves. $B$ is a doubleton, partitioned into its two singletons with an internal one-edge bijection. Thus $\neg\Even(A)$ and $\Even(B)$, exactly \eqref{eq:singleflip}.
\end{proof}

\begin{lemma}[Induction from the all-finite hypothesis]\label{lem:induction}
Assume \eqref{eq:allfinite}. Let $P(A)$ be represented by an actual stratified first-order formula of the membership language. If $P$ holds on every singleton and is inherited by every internally represented fresh one-point extension, then $P(A)$ holds for every object $A$.
\end{lemma}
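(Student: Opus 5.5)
We argue by contradiction, using the $\NoNT$ half of \eqref{eq:allfinite} applied to a subset of $A$. Fix an object $A$ and suppose $\neg P(A)$. Since $P$ is given by a stratified formula, the collection
\[
D\defeq\{S:\ S\subseteq A,\ \neg P(S)\}
\]
is defined by a stratified condition ($S\subseteq A$ is stratified, and $\neg P(S)$ is stratified with $S$ at the level used in $P$). It is inhabited because $A\in D$. Inhabited stratified comprehension therefore makes $D$ an object.

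Next we find the right well-founded structure on $D$. The natural idea is to take the strict relation $R(S,S')\iff S'\subsetneq S$ and to show it is nonterminating on $D$. Given $S\in D$, the set $S$ is not a singleton, because $P$ holds on singletons. Take a well-order $\le$ witnessing $\BiWO$ for $S$; by the all-finite hypothesis every object is finite, and finiteness of $S$ supplies such an order. Let $m$ be the $\le$-greatest element of $S$. Then $S'=S\setminus\{m\}$ is inhabited, since $S$ is not a singleton. This nonempty difference is formed by a stratified, witnessed comprehension. We have $S'\rightsquigarrow S$ with fresh point $m$. Because $P$ is inherited by fresh one-point extensions, $P(S')$ would give $P(S)$. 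Hence $\neg P(S')$, so $S'\in D$ and $R(S,S')$.

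The relation $R$ is irreflexive and transitive, being proper inclusion. It is stratified: $S,S'$ sit at the same level, so its graph of homogeneous pair codes is available. It is inhabited, since we just exhibited an edge. Thus $R$ is an internally represented, irreflexive, transitive, nonterminating relation on the inhabited $D\subseteq\Powp(A)$, which contradicts $\NoNT(\Powp(A))$. Strictly speaking, $D$ is a subset of $\Powp(A)$, not of $A$, but \eqref{eq:allfinite} gives $\Fin(\Powp(A))$, so $\NoNT$ applies to it. Therefore $P(A)$ holds.

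The main obstacle is the choice step. In the nontermination argument we must exhibit an $R$-successor for each $S$, and nontermination in Definition~\ref{def:nont} only asks for existence, so no uniform choice function is needed. We do not even need a greatest element; any $m\in S$ works, provided $S\setminus\{m\}$ is inhabited. So the well-order can be dropped entirely, and the argument uses only $\NoNT$ of $\Powp(A)$.

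The remaining care is bookkeeping. We must check that $\neg P$ together with the subset condition keeps a consistent stratification. We must check that the pair-code graph of $R$ is a homogeneous, level-raising comprehension. And we must check that the one-point extension $S'\rightsquigarrow S$ is exactly the ``internally represented fresh one-point extension'' in the hypothesis, i.e.\ that $S=S'\cup\{m\}$ holds internally with $m\notin S'$.
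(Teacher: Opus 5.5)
Your argument is correct and is essentially the paper's proof: you form the internal counterexample set by inhabited stratified comprehension, use the singleton base and the no-empty axiom to delete a point while keeping an inhabited remainder, use the extension step to see that the remainder is again a counterexample, and then contradict $\NoNT$ via the internally coded proper-inclusion relation. The differences are only cosmetic. You localize $D$ to subsets of a fixed counterexample $A$ and invoke $\NoNT(\Powp(A))$, whereas the paper applies $\NoNT(D)$ directly to the global counterexample set; and, as you yourself note, the $\BiWO$-greatest element is not needed.
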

\begin{proof}
If there is a counterexample, inhabited stratified comprehension produces the \emph{internal} counterexample set $D=\{A:\neg P(A)\}$. No element of $D$ is a singleton. Every $B\in D$ therefore contains distinct $p,q$; remove $p$ using $q$ as the explicit witness that $B\setminus\{p\}$ exists and is inhabited. If that predecessor satisfied $P$, the extension step would imply $P(B)$, a contradiction. Consequently every $B\in D$ has a proper predecessor in $D$. The relation $B\,R\,A$ defined by ``$A$ is a proper subset of $B$'' is irreflexive, transitive and nonterminating on $D$. The formal proof constructs its internal Kuratowski graph by a stratified formula and restricts its support to $D$. This contradicts $\NoNT(D)$ in \eqref{eq:allfinite}. Hence $D$ cannot exist.
\end{proof}

\begin{lemma}[One-point parity reversal]\label{lem:flip}
Under \eqref{eq:allfinite}, every fresh one-point extension satisfies
\begin{equation}\label{eq:flip}
A\rightsquigarrow B\quad\Longrightarrow\quad
\Even(A)\iff\neg\Even(B).
\end{equation}
\end{lemma}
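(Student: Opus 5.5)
We derive Lemma~\ref{lem:flip} from Lemma~\ref{lem:induction}, using Lemma~\ref{lem:base} for the base case and Lemma~\ref{lem:twopoint} for the inductive step. The induction predicate $P(A)$ is
\[
\forall B\,\forall p\;\bigl(p\notin A\land B=A\cup\{p\}\Rightarrow(\Even(A)\leftrightarrow\neg\Even(B))\bigr).
\]
First we check that $P$ is stratified. The conditions $p\notin A$ and $B=A\cup\{p\}$ are stratified, with $A$ and $B$ at the same level and $p$ one level lower. $\Even(A)$ quantifies over $X,Y$ at the level of $A$ and over the Kuratowski pair graph $f$ at a fixed higher level. It is therefore stratified with $A$ free at one level, and likewise for $B$ at the same level. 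So $P$ is an actual stratified formula, and Lemma~\ref{lem:induction} applies to it.

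\emph{Base case.} Let $A=\{p_0\}$ be a singleton. Every fresh extension of $A$ is a doubleton, so Lemma~\ref{lem:base} gives $P(A)$ directly.

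\emph{Inductive step.} Suppose $A'\rightsquigarrow A$ via a fresh point $r$, and $P(A')$ holds. Let $A\rightsquigarrow B$ via a fresh point $p$, so $p\notin A$. Then $p\neq r$ and $p\notin A'$, so $A'\rightsquigarrow A\rightsquigarrow B$ is a legitimate two-step extension. Lemma~\ref{lem:twopoint} gives $\Even(A')\iff\Even(B)$, and $P(A')$ applied to $A$ gives $\Even(A')\iff\neg\Even(A)$. Combining the two,
\[
\Even(A)\iff\neg\Even(A')\iff\neg\Even(B),
\]
which is $P(A)$. Lemma~\ref{lem:induction} now yields $P(A)$ for every object $A$, and this is \eqref{eq:flip}.

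\emph{Main obstacle.} The hardest point is matching the shape of the inductive step to Lemma~\ref{lem:induction}. That lemma needs heredity along fresh extensions $A'\rightsquigarrow A$ only; it does not require the predecessor to be recoverable canonically. The argument above fits this shape, because it uses only the given predecessor $A'$ and the fresh point $r$. The remaining check is internal: every intermediate object, in particular $B=A\cup\{p\}$ and the two-step chain, must exist by inhabited comprehension. Here $A$ itself is the inhabitance witness for $A\cup\{p\}$, so no empty set is ever needed.
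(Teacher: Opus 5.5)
Your proof is correct and follows the paper's argument: the same predicate $P$, the singleton base from Lemma~\ref{lem:base}, a successor step that eliminates $\Even$ of the predecessor between $P$ of the predecessor and Lemma~\ref{lem:twopoint}, and a final appeal to Lemma~\ref{lem:induction}; your explicit stratification check usefully spells out what the paper delegates to the Lean code (Run~67). One small slip: the inhabitance witness for the comprehension defining $A\cup\{p\}$ is $p$ (or any member of $A$), not $A$ itself, and in any case $B$ is supplied by the hypothesis $A\rightsquigarrow B$, so it need not be constructed.
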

\begin{proof}
For each internal set $A$ consider the statement
\[
P(A)\defeq\forall B,p\;\bigl((B=A\cup\{p\}\ \wedge\ p\notin A)
\Longrightarrow(\Even(A)\leftrightarrow\neg\Even(B))\bigr).
\]
The singleton base is Lemma~\ref{lem:base}. For the successor step, suppose $P(A)$ and $A\rightsquigarrow B\rightsquigarrow C$. From $P(A)$ we have
\[
\Even(A)\iff\neg\Even(B).
\]
From Lemma~\ref{lem:twopoint} we have
\[
\Even(A)\iff\Even(C).
\]
Eliminating $\Even(A)$ yields $\Even(B)\iff\neg\Even(C)$, which is the required $P(B)$. Finally, unlike unrestricted induction over arbitrary predicates, Lemma~\ref{lem:induction} applies here \emph{because the formalization provides an explicit stratified formula for $P$ and proves its intended semantics}. The relevant formula and correctness theorem are in Run~67. Therefore $P(A)$ holds for every $A$, and \eqref{eq:flip} follows.
\end{proof}

\begin{lemma}[Parity transport to Frege cardinals]\label{lem:fregeparity}
Define $\Even(|A|)$ by existence of an even representative. This is independent of the representative. For positive Frege cardinals $r,s$:
\begin{align}
\Even(T(r))&\iff\Even(r),\label{eq:parityT}\\
\Succ(r,s)&\Longrightarrow
\bigl(\Even(r)\iff\neg\Even(s)\bigr).\label{eq:paritysucc}
\end{align}
\end{lemma}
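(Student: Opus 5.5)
The argument splits into three parts: representative independence, the type-raising clause \eqref{eq:parityT}, and the successor clause \eqref{eq:paritysucc}.

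\emph{Representative independence.} Suppose $A\simeq A'$ via an internal bijection graph $g$, and $A=X\dot\cup Y$ with $f:X\simeq Y$. We set $X'=g[X]$ and $Y'=g[Y]$. Both are inhabited, since $X$ and $Y$ are and $g$ is total, so the stratified image comprehensions are witnessed. The composite graph $g\circ f\circ g^{-1}\upharpoonright X'$ is an internal bijection $X'\simeq Y'$, and $A'=X'\dot\cup Y'$ because $g$ is a bijection. Hence $\Even(A)\Rightarrow\Even(A')$, and symmetry gives the converse. Consequently ``some representative is even'' is equivalent to ``every representative is even''.

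\emph{Type-raising, \eqref{eq:parityT}.} Write $r=|A|$, so that $T(r)=|\USC(A)|$. For the forward direction, from $A=X\dot\cup Y$ and $f:X\simeq Y$ we obtain $\USC(A)=\USC(X)\dot\cup\USC(Y)$. The raised graph $\{(\{x\},\{y\}):(x,y)\in f\}$ is stratified, since $\{x\}$ and $\{y\}$ sit one level above $x,y$ homogeneously, and it is inhabited. It is therefore an internal bijection $\USC(X)\simeq\USC(Y)$.

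For the converse, suppose $\USC(A)=\mathcal X\dot\cup\mathcal Y$ with $F:\mathcal X\simeq\mathcal Y$. We decode $X=\bigcup\mathcal X$ and $Y=\bigcup\mathcal Y$, and the lowered graph $\{(x,y):(\{x\},\{y\})\in F\}$. This formula is again stratified, because $x$ and $y$ occur one level below the coded singletons. Disjointness and coverage of $A$ transfer directly.

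By representative independence, parity of the cardinal is then the same as parity of any chosen representative, which gives \eqref{eq:parityT}.

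\emph{Successor, \eqref{eq:paritysucc}.} Interpret $\Succ(r,s)$ as: there are representatives $A\in r$ and $B\in s$ with $A\rightsquigarrow B$, i.e.\ $s=r+1_F$ in the sense of Section~\ref{sec:card}. Given such representatives, Lemma~\ref{lem:flip} yields $\Even(A)\iff\neg\Even(B)$. Representative independence converts this into $\Even(r)\iff\neg\Even(s)$.

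We must also check that $\Succ$ does not depend on which fresh point or representative is used. Every $B'\in s$ is equinumerous with $B$, and the parity of $B'$ equals that of $B$ by the first step. So the conclusion holds no matter how $\Succ$ is witnessed.

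\emph{Expected obstacle.} The delicate point is the decoding direction of \eqref{eq:parityT}: checking that the lowered graph is genuinely stratified and inhabited. Every element of $\USC(A)$ is a singleton, so $\mathcal X$ and $\mathcal Y$ consist of singletons and the unions recover subsets of $A$. I would first write the comprehension formula explicitly, as $(x,y)\in g \leftrightarrow \exists u,v\,(u=\{x\}\wedge v=\{y\}\wedge\langle u,v\rangle\in F)$, and check its levels against the homogeneous Kuratowski coding. The remaining steps are routine transports.
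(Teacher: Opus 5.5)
Your proposal is correct and follows essentially the same route as the paper. You transport the bipartition and matching along an internal bijection for representative independence, lift the matching to singleton images (and decode back) for \eqref{eq:parityT}, and combine Lemma~\ref{lem:flip} with representative independence for \eqref{eq:paritysucc}. Your explicit stratification and inhabitedness checks for the raised and lowered graphs spell out details the paper leaves to the formal development; note only that \eqref{eq:paritysucc} depends on the all-finite hypothesis \eqref{eq:allfinite} through Lemma~\ref{lem:flip}.
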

\begin{proof}
An internal bijection $A\simeq B$ transports a bipartition and its matching to $B$, so $\Even$ depends only on $|A|$. Taking singleton images of both halves and sending $\{x\}$ to $\{f(x)\}$ transports a matching to $\USC(A)$, proving the forward direction of \eqref{eq:parityT}; the inverse argument decodes the singleton representatives. An internally coded Frege-successor witness consists of representatives $A,B$ and a fresh one-point extension $A\rightsquigarrow B$. Apply Lemma~\ref{lem:flip} and representative independence to obtain \eqref{eq:paritysucc}.
\end{proof}

\section{Conclusion of the contradiction}\label{sec:end}
\begin{proof}[Proof of Theorem~\ref{thm:main}]
Suppose for a contradiction that every object is finite. Sections~\ref{sec:card}--\ref{sec:equation} construct the actual finite Frege cardinal $n=|\Orb(2_F)|$ and prove $n=T(n)+1_F$. Lemma~\ref{lem:fregeparity} says simultaneously
\[
\Even(n)\iff\Even(T(n))
\quad\text{and}\quad
\Even(n)\iff\neg\Even(T(n)),
\]
the second equivalence following from the successor equation. Propositional logic now yields a contradiction without choosing an ``odd'' witness or postulating parity exhaustiveness. Thus $\neg\forall A\,\Fin(A)$, and classical logic gives $\exists A\,\neg\Fin(A)$.
\end{proof}

\section{Relation to the Lean formalization}\label{sec:lean}
The companion Lean~4 project (pinned to Lean~4.24.0 and Mathlib~v4.24.0) encodes the above theory as an explicit structure rather than treating the model's existence as a theorem. In particular, the top-level statement is:
\begin{quote}\ttfamily\small
NF0Specker.Run68.infinity\\
\{V : Type*\} (M : NF0Specker.Run17.PositiveModel V) :\\
\hspace*{1em}\(\exists\) S : V, \(\neg\) NF0Specker.Run17.Finite M.mem S
\end{quote}
The supplied archive includes successful build and axiom-audit logs. Its verification report says the main and separate Tarski appendix build under the pinned toolchain. The accompanying report is evidence of a previous successful compilation; the present mathematical paper was not independently recompiled as part of typesetting.

\begin{center}\footnotesize
\begin{tabular}{@{}p{0.41\linewidth}p{0.51\linewidth}@{}}\toprule
Mathematical obligation & Primary Lean declaration(s)\\\midrule
Exact model and finiteness & \texttt{Run17.PositiveModel}, \texttt{Run17.Finite}\\
Pigeonhole and comparison & \texttt{endomap\_surjective\_of\_all\_finite};\newline \texttt{run35\_all\_positive\_frege\_comparable}\\
Finite extrema & \texttt{Run17.maximal\_of\_supportedNoNT} and applications\\
Exponentiation graph/boundaries & \texttt{positiveExponentGraph\_with\_boundaries\_exists}\\
Two-point graph surgery & \texttt{run64\_edgeRerouteGraph\_exists};\newline \texttt{run65\_reverse\_two\_point\_even};\newline \texttt{run65\_two\_extensions\_even\_iff}\\
Stratified parity induction & \texttt{run66\_stratified\_oldSet\_induction};\newline \texttt{run67EvenCode\_correct};\newline \texttt{run67FlipCode\_correct}\\
Successor parity flip & \texttt{run67\_one\_point\_flip};\newline \texttt{run63\_frege\_successor\_flip}\\
Exact two-seed assembly & \texttt{Run68.all\_finite\_contradiction}; \texttt{Run68.infinity}\\\bottomrule
\end{tabular}
\end{center}

The recorded axiom audit lists exactly the following Lean metatheoretic dependencies:\begin{center}\texttt{propext}\qquad\texttt{Classical.choice}\qquad\texttt{Quot.sound}\end{center}No other axiom or admitted proof hole is reported in the endpoint's transitive dependencies. These dependencies do \emph{not} add an object-level Choice axiom to the modeled set theory. The source files retain some historical comments calling earlier stages ``uncompiled''; verification status should be judged from the final build report and reproducible compilation, not those superseded comments.

\subsection{Scope and limitations}
The paper is a mathematical exposition of the proof architecture and principal internal constructions. It does not print every first-order formula AST, every stratification certificate, or all the graph-transport derivations. Those details belong to the accompanying Lean source. Conversely, a Lean kernel check establishes the theorem with the \emph{exact} model fields and definitions encoded in that source: it must not be paraphrased as a theorem of a different axiom system without an interpretation proof. The Tarski-finiteness equivalence present in the separate appendix is useful additional context, not an ingredient of the main endpoint.

\section*{Acknowledgement and source transparency}
This exposition is based on the accompanying mathematical notes and Lean formal development. Assistance with mathematical exposition and manuscript preparation was provided by an AI writing tool. The named author retains responsibility for the mathematical content and scholarly attribution.

\end{document}